\documentclass[10pt]{article}
\usepackage{xcolor}
\usepackage{graphicx}
 \usepackage{amsfonts}
 \usepackage{amsmath}
 \usepackage{amsthm}
 \usepackage{natbib} 
\newtheorem{theorem}{Theorem}
\newtheorem{corollary}{Corollary}
\newtheorem{lemma}{Lemma}
\newtheorem{proposition}{Proposition}
\newtheorem{remark}{Remark}

\def\process#1{#1=\{#1_t\}_{t\geq 0}}
\newcommand\var{\operatorname{\mathbf{var}}}
\newcommand{\indicator}{\mathbf{1}}
\newcommand{\F}{\mathcal{F}}

\newcommand{\R}{\mathbb{R}}
\newcommand{\M}{\mathcal{M}}
\newcommand\probz{{\operatorname{\mathbf{P}}}}
\newcommand\proba{{\operatorname{\mathbf{P}}_x}}
\newcommand\E{{\operatorname{\mathbf{E}}_x}}
\newcommand\Ez{{\operatorname{\mathbf{E}}_0}}
\newcommand\Et{{\operatorname{\tilde{\mathbf{E}}}_0}}

\begin{document}
 \title{Optimal stopping for L\'evy processes \\ with polynomial rewards}
 \author{Ernesto  Mordecki\footnote{Universidad de la Rep\'ublica, Facultad de Ciencias, Centro de Matem\'atica. Igu\'a 4225, 11400 Montevideo, Uruguay. e-mail: mordecki@cmat.edu.uy}\qquad 
Yuliya Mishura
 \footnote{National Taras Shevchenko University of Kyiv,
Department of Probability Theory, Statistics and Actuarial Mathematics.
Volodymirska 60, 01601 Kyiv, Ukraine. e-mail: myus@univ.kiev.ua.}
 }
 \maketitle
 \begin{abstract}
Explicit solution of an infinite horizon optimal stopping problem 
for a L\'evy processes with a polynomial reward function is given,
in terms of the overall supremum of the process, when the solution of the problem is one-sided.
The results are obtained via the generalization of known results about the \emph{averaging function} associated with the problem. This averaging function can be directly computed in case of polynomial  rewards.
To illustrate this result, examples for general quadratic and cubic polynomials are discussed in case the process is Brownian motion, and the optimal stopping problem for a quartic polynomial and a Kou's process is solved.
\end{abstract}

{Keywors: Optimal stopping, L\'evy processes, polynomial rewards}

{AMS MSC: 60G40}

\section{Introduction}
Since the seminal work of \cite{darling-et-al:1972},
giving the solution to the optimal stopping problem
for random walks, and reward functions of  the form
$g(x)=x^+$ and $g(x)=(e^x-1)^+$, in terms of the
distribution of the maximum of the random walk, it became clear the possibility of linking
these two relevant problems in probability theory:
the optimal stopping problem and the computation of the distribution of
the overall maximum of a random walk.
The natural question that this work posed was the possibility of extending these results
to more general classes of processes,
and to more general reward functions.


The first results for L\'evy processes were obtained by \cite{poli,bachelier},
where the similar corresponding problems for arbitrary L\'evy processes are solved,
based on a discretization approximation argument,
for the same reward functions, with the novelty of the consideration of the decreasing put reward
$g(x)=(K-e^x)^+$, that has a solution in terms of the overall infimum of the process.
The first results for general payoffs were obtained by \cite{bs:2002}.
Namely, using the technique of the Pseudo-Differential operators,
these authors obtained solutions to optimal stopping problems considering a large
class of reward functions, making clear that the obtained previously results were
not based on particular properties of the payoff function, but only on the properties of the
L\'evy processes. Their approach is analytic, based on the decomposition of an operator,
that is in certain sense equivalent to the Wiener-Hopf factorization, and imposes certain restrictions on the class of
L\'evy processes to which the results can be applied. For a general exposition of these results see also
\cite{bl-book}.
Afterwards, \cite{novikov-shiryaev:2004}
solved the optimal stopping problem for arbitrary random walks
and reward functions of the form $g(x)=(x^+)^n$, in terms
of the Appell polynomials, and \cite{novikov-shiryaev:2007} gave the solution 
to the problem with a power function reward with real and positive exponent, for both random walks
and L\'evy process.
 \cite{salminen} applies the representation method
for this problem (initiated in \cite{salminen85}) finding the representing measure of
the value function.
More recently, \cite{mt} considered the optimal stopping problem for
a general polynomial reward and a random walk.
\cite{alili-kyprianou:2005}
and \cite{kyprianou-surya:2005}
obtained a new proof of the main results in \cite{poli}
and a generalization of the results in \cite{novikov-shiryaev:2004}
for L\'evy process respectively, in both cases based on the strong
Markov property of L\'evy processes.
These contributions were summarized in the monograph by \cite{kyprianou:2006}.
On the way to the  consideration of more general processes,
\cite{ms} obtained a representation of the value function for Hunt processes, that in the
case of L\'evy processes give a representation in terms of the maximum of the process,
and \cite{bcs}  exploited the excessive property of the maximum of a Markov process
to obtain a verification theorem. It became then clear that the results were based on
the probabilistic properties which random walks and
L\'evy processes share, i.e. the independence
and  homogeneity of increments, and not on the particular
form of the reward functions. Nevertheless, some particular reward functions
admitted solutions in closed form.

The approach that we use in this paper is the \emph{averaging problem}, that was introduced in \cite{surya:2007} (see also \cite{surya:2007t}). 
%
The objective of the present paper is then twofold.
We first present a theorem that summarizes and slightly improves the results of
\cite{surya:2007} and
\cite{bcs} in the case of L\'evy processes.
The improvement consists in the observation that the averaging function in \cite{surya:2007}
(or the function $\hat{f}$ in \cite{bcs}) need not to be defined in the whole line, consequently the
condition of this function to be negative on a certain set is not necessary.
This allows to apply the result to larger classes of payoffs functions, what can be verified for certain polynomial rewards
(see Remark \ref{remark:1}).
The second objective of the paper is to apply the previous results to the class of general polynomial rewards.
The main result there is a simple algorithm to compute the averaging polynomial $P$ of a given polynomial $p$.

The content of the paper is as follows.
In Section 2 we formulate the problem and prove the main results.
In Section 3 we specializes to polynomial rewards.
In Section 4 we present some examples: we discuss in detail the optimal stopping problem for
Brownian motion and general quadratic and cubic polynomials, and also solve explicitely the optimal stopping
problem for a quartic polynomial for a Kou process.
 

\section{Formulation of the problem and main results}


Let $\process X$ be a L\'evy process defined on a
stochastic basis ${\cal B}=(\Omega, {\cal F}, {\bf F}=({\cal
F}_t)_{t\geq 0}, \proba)$ departing from $X_0=x$.  For $z\in i\R$, the L\'evy-Khintchine formula states
$
\Ez e^{zX_t}=e^{t\psi(z)}
$
with
\begin{equation}\label{eq:char-exp}
\psi(z)=az+{\sigma^2\over 2}z^2+\int_{\R}\left(e^{zy}-1-zh(y)\right)\Pi(dy),
\end{equation}
where $a\in\R$, $\sigma\geq 0$ and $\Pi(dy)$ that satisfies $\int_{\R}(1\wedge y^2)\Pi(dy)<\infty$
conform the characteristic triplet
$(a,\sigma,\Pi)$ of the process.
Here $h(y)=y\indicator_{\{|y|<1\}}$ is a truncation function.
Given the stochastic basis ${\cal B}$ the set of stopping times is the set of random variables
$$
\M=\{\tau\colon\Omega\to[0,\infty] \text{ such that } \{\tau\leq t\}\in\mathcal{F}_t \text{ for all $t\geq 0$}\}.
$$
Observe that we allow the possibility $\tau=\infty$, as for several optimal stopping problems, the optimal stopping time
is within this class.
A key r\^ole in the solution of the problem is played by the overall maximum of the process,
defined, for $r\geq 0$ by
$$
M=\sup\{X_t\colon 0\leq t\leq e(r)\},
$$
where $e(r)$ is an exponential random variable of parameter $r>0$, and we assume $e(0)=\infty$.
We further assume thorough the paper that $M$ is a proper random variable.
This entails either that  $r>0$ or that $\process{X}$ drifts to $-\infty$ when $r=0$, and that
$\proba(M>x)>0$, excluding the case of the negative of a subordinator, that gives $M=x$ a.s.

Given a non-negative payoff function $g(x)$,
a process $\{X_t\}_{t\geq 0}$ departing from $X_0=x$ adapted to a filtration $\mathbf{F}$,
and a discount factor $r\geq 0$, the optimal stopping problem consists
in finding the value function ${V}(x)$ and the optimal stopping rule $\tau^*$ such that
\begin{equation}\label{eq:osp}
{V}(x)=\sup_{\tau\in\M}\E e^{-r\tau}g(X_{\tau})=\E e^{-r\tau^*}g(X_{\tau^*}).
\end{equation}
Following \cite{s} we assume that the payoff received in the set 
$\{\omega\colon\tau(\omega)=\infty\}$ is 
$$
\limsup_{t\to\infty}e^{-rt}g(X_t).
$$
In the present paper we are interested in problems with one-sided solution, i.e. such that the optimal stopping rule is of the form
\begin{equation}\label{eq:taustar}
\tau^*=\inf\{t\geq 0\colon X_t\geq x^*\},
\end{equation}
for some critical threshold $x^*$. For this reason we assume that $\limsup_{x\to\infty}g(x)=\lim_{x\to\infty}g(x)=0$.

The averaging problem for optimal stopping, introduced by \cite{surya:2007},
consists in finding an auxiliary function $Q$ such that
\begin{equation}\label{eq:average}
\E Q(M)=g(x)\text{ for all $x\in\R$},
\end{equation}
where $g$ is the payoff function of the problem and $M$ the overall maximum.
This approach, combined with the strong Markov property and invariance of increments of L\'evy process
gives a fluctuation identity that allows to write the value function of the problem
\eqref{eq:osp} in terms of $M$ (see \eqref{eq:threshold} in Lemma 1 below).
Here we present a generalization of the results in \cite{surya:2007}.

\begin{theorem}\label{theorem:one}
Consider a L\'evy process $\process X$,
a discount rate ${r}\geq 0$,
and a reward function $g\colon\R\to[0,\infty)$ such that $\lim_{x\to-\infty}g(x)=0$.
Assume that there exists a point $x^*$ and a non-decreasing function $G^*:[x^*,\infty)\to\R$ such that
$$
\E {G^*}({M})=g(x), \text{ for all $x\geq x^*$.}
$$
Define the function
\begin{equation}\label{eq:ge}
G(x)=
\begin{cases}
G^*(x),	& \text{ if $x\geq x^*$},\\
0,			& \text{ if $x<x^*$},
\end{cases}
\end{equation}
and the function $V\colon\R\to\R$ by
\begin{equation}\label{eq:ve}
V(x)=\E {G}({M}),\quad
\text{for all $x\in\R$.}
\end{equation}
If the condition
\begin{equation}\label{eq:relevant}
V(x)\geq g(x),\quad\text{for all $x< x^*$},
\end{equation}
is satisfied, then the optimal stopping problem \eqref{eq:osp}
has value function $V(x)$ in \eqref{eq:ve}, and \eqref{eq:taustar} is an optimal stopping time for the problem.
\end{theorem}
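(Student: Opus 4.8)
The plan is to read the statement as a verification theorem and to check the three standard ingredients for the candidate $V$ of \eqref{eq:ve}: that $V$ dominates the reward, that the discounted process $e^{-rt}V(X_t)$ is a nonnegative supermartingale, and that the threshold rule \eqref{eq:taustar} attains $V$. The domination is almost immediate. If $x\geq x^*$ then $M\geq X_0=x\geq x^*$ almost surely, so $G(M)=G^*(M)$ and the averaging hypothesis gives $V(x)=\E G^*(M)=g(x)$; for $x<x^*$ the inequality $V(x)\geq g(x)$ is exactly the assumption \eqref{eq:relevant}. Hence $V\geq g$ on all of $\R$, and moreover $V=g$ on $[x^*,\infty)$.

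Next I would establish the fluctuation identity showing that $V$ equals the reward of the threshold rule, which simultaneously produces the lower bound $V(x)\leq\sup_{\tau}\E e^{-r\tau}g(X_{\tau})$. For $x<x^*$, since $G$ vanishes below $x^*$, one has $V(x)=\E[G^*(M)\indicator_{\{M\geq x^*\}}]$ and $\{M\geq x^*\}=\{\tau^*\leq e(r)\}$. On this event the overall maximum is attained after $\tau^*$, because the path stays below $x^*$ up to $\tau^*$. By the strong Markov property at $\tau^*$ together with the lack-of-memory of the exponential horizon, the post-$\tau^*$ supremum, conditionally on $\F_{\tau^*}$, has the law of $M$ for the process restarted at $X_{\tau^*}\geq x^*$ (overshoot allowed). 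Averaging first over that restarted maximum and invoking the hypothesis $\E G^*(M)=g$ at the level $X_{\tau^*}\geq x^*$ collapses the inner expectation to $g(X_{\tau^*})$; integrating out the exponential clock via $\probz(e(r)\geq\tau^*\mid\F_{\tau^*})=e^{-r\tau^*}$ then yields $V(x)=\E e^{-r\tau^*}g(X_{\tau^*})$. Here one must check that $\{\tau^*=\infty\}$ contributes the prescribed payoff $\limsup_{t\to\infty}e^{-rt}g(X_t)$: when $r>0$ discounting annihilates it, and when $r=0$ the process drifts to $-\infty$ on that event, so $g(X_t)\to0$ by $\lim_{x\to-\infty}g=0$.

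The crux is the supermartingale, equivalently $r$-excessive, property of $V$, and this is exactly where the monotonicity of $G$ and the special role of the maximum enter. Fixing $t\geq0$ and splitting the maximum at time $t$, on $\{e(r)>t\}$ one has $M\geq X_t+\bar M''$, where $\bar M''$ is the supremum of the post-$t$ increments over the residual horizon; by the memoryless property $\bar M''$ has the law of the overall maximum and is independent of $X_t$ and of the event $\{e(r)>t\}$, which has probability $e^{-rt}$. Since $G$ is non-decreasing (hence nonnegative), $G(M)\geq G(X_t+\bar M'')\indicator_{\{e(r)>t\}}$, and taking expectations gives
\begin{equation*}
V(x)=\E G(M)\;\geq\; e^{-rt}\,\E\!\left[G(X_t+\bar M'')\right]\;=\;e^{-rt}\,\E\, V(X_t).
\end{equation*}
Together with the Markov property this upgrades to $\E[e^{-rt}V(X_t)\mid\F_s]\leq e^{-rs}V(X_s)$ for $s\leq t$, so $e^{-rt}V(X_t)$ is a nonnegative supermartingale. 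Optional stopping combined with $V\geq g$ then gives, for every $\tau\in\M$, the chain $\E e^{-r\tau}g(X_{\tau})\leq\E e^{-r\tau}V(X_{\tau})\leq V(x)$; taking the supremum over $\tau$ and combining with the previous step yields $V(x)=\sup_{\tau}\E e^{-r\tau}g(X_{\tau})$ with $\tau^*$ optimal.

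The main obstacle I anticipate is not the verification skeleton but the two places demanding measure-theoretic care: proving the excessive inequality cleanly (the decomposition of $M$ at a fixed, then a stopping, time and the independence supplied by the lack of memory of $e(r)$), and handling the boundary of the problem—the possibly infinite stopping time, the prescribed value on $\{\tau=\infty\}$, and the integrability needed to apply optional stopping to the supermartingale. A secondary point to verify is that the extension of $G^*$ by $0$ in \eqref{eq:ge} is genuinely non-decreasing on all of $\R$, so that $G(M)\geq G(X_t+\bar M'')$ is legitimate across the threshold $x^*$; this monotonicity is precisely what makes the excessive property, and hence the whole argument, go through.
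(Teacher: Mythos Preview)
Your proposal is correct and follows essentially the same verification skeleton as the paper: the paper packages the fluctuation identity as Lemma~\ref{lemma:one} and the $r$-excessive property as Lemma~\ref{lemma:two} (the latter deferred to Lemma~2.2 of \cite{bcs}), then combines them exactly as you do via $V(x)\geq\E e^{-r\tau}V(X_\tau)\geq\E e^{-r\tau}g(X_\tau)$. Your direct argument for excessiveness---decomposing $M$ at time $t$, using memorylessness of $e(r)$, and invoking monotonicity of $G$ to get $G(M)\geq G(X_t+\bar M'')\indicator_{\{e(r)>t\}}$---is precisely the content of the cited lemma spelled out, and your flag that the extension \eqref{eq:ge} must be globally non-decreasing (i.e.\ $G^*(x^*)\geq 0$) is a point the paper leaves implicit.
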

\begin{remark}\label{remark:1}
Compared to Theorem 5.3.1. in \cite{surya:2007t}, Theorem \ref{theorem:one} above does not require the solution of the averaging problem for $g$ and $M$ to be found on the whole real line, 
but only on a certain set of the form $[x^*,\infty)$. 
The relevant new condition to be verified on the set $(-\infty,x^*)$ is \eqref{eq:relevant}. 
If the averaging function $Q$ (satisfying \eqref{eq:average}) can be defined in the whole real line 
and it satisfies  $Q(x)\leq 0$ on the set $(-\infty,x^*)$, then, condition 
\eqref{eq:relevant} follows (see Corollary \ref{corollary:1}). Our function $G$ is simply defined to be zero on this set.
In Example \ref{example:1} when $a=-1$ we observe that \eqref{eq:relevant} is verified while the averaging function corresponding to \eqref{eq:average}
takes positive values (for instance, $P_2(0)=1$, see Figure \ref{figure:1}).
Furthermore, condition \eqref{eq:relevant} is slightly more general than condition
(b)(ii) in Theorem 2.4 in \cite{bcs}.
\end{remark}
As usual in optimal stopping proofs we have to verify two statements:
\begin{align}
V(x)&=\E e^{-r\tau^*}g(X_{\tau^*})=\E e^{-r\tau^*}g(X_{\tau^*}).\label{eq:equality}\\
V(x)&\geq\E e^{-r\tau}g(X_{\tau})=\E e^{-r\tau}g(X_{\tau}),
\quad\forall\tau\in\M.\label{eq:inequality}
\end{align}
These two statements are proved based on the following two  lemmas  which proofs  follow essentially
the respective proofs of \cite{surya:2007} and \cite{bcs} with the minor necessary modifications.
\begin{lemma}\label{lemma:one}
Consider a L\'evy process $X$,
a discount rate ${r}\geq 0$,
a reward function $g$, 
a threshold $x^*$, an averaging function $G^*$, 
and the extended function $G$, all this elements as in Theorem \ref{theorem:one}.
Then, for any $a\geq x^*$ and $x\in\R$,
\begin{equation}\label{eq:threshold}
\E {G}({M})\indicator_{\{M\geq a\}}=\E e^{-{r}\tau_a}g(X_{\tau_a})\indicator_{\{\tau_a<\infty\}}.
\end{equation}
In particular, when $a=x^*$, for $\tau^*$ in \eqref{eq:taustar} and $r>0$, we have 
\begin{equation*}
\E {G}({M})\indicator_{\{M\geq x^*\}}=\E e^{-{r}\tau^*}g(X_{\tau^*})\indicator_{\{\tau^*<\infty\}}.
\end{equation*}
meanwhile, when $r=0$, we have
\begin{equation*}
\E {G}({M})\indicator_{\{M\geq x^*\}}=\E g(X_{\tau^*})\indicator_{\{\tau^*<\infty\}}.
\end{equation*}
\end{lemma}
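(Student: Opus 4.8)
The plan is to read \eqref{eq:threshold} as a fluctuation identity obtained by applying the strong Markov property at the first passage time $\tau_a=\inf\{t\geq 0\colon X_t\geq a\}$ and then invoking the averaging hypothesis at the overshoot position $X_{\tau_a}$. There are three ingredients: a pathwise decomposition of the overall maximum $M$ at time $\tau_a$; the memorylessness of the independent exponential horizon $e(r)$, which is what produces the discount factor $e^{-r\tau_a}$; and the relation $\operatorname{\mathbf{E}}_{y}G(M)=g(y)$ valid for every $y\geq x^*$, which replaces the post-$\tau_a$ contribution by $g(X_{\tau_a})$.

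First I would identify the event $\{M\geq a\}$ with $\{\tau_a\leq e(r)\}$ (for $r=0$, where $e(0)=\infty$, this is simply $\{\tau_a<\infty\}$). On this event $X_t<a$ for $t<\tau_a$ while $X_{\tau_a}\geq a$, so the supremum up to $\tau_a$ equals $X_{\tau_a}$. Splitting the supremum defining $M$ at $\tau_a$ and using homogeneity of increments together with the strong Markov property, I would write, on $\{\tau_a\leq e(r)\}$,
$$
M=X_{\tau_a}+\tilde M,
$$
where $\tilde M\geq 0$ is the overall maximum of an independent copy of the process started at $0$: by memorylessness, conditionally on $\{e(r)\geq\tau_a\}$ and on $\F_{\tau_a}$ the residual lifetime $e(r)-\tau_a$ is again exponential of parameter $r$ and independent of $\F_{\tau_a}$, so $\tilde M$ is distributed as $M$ under $\operatorname{\mathbf{E}}_0$ and independent of $\F_{\tau_a}$. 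Since $\tilde M\geq 0$, the pre-$\tau_a$ contribution $X_{\tau_a}$ is absorbed into the maximum and the displayed equality holds with no residual $\max$.

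With this decomposition I would condition on $\F_{\tau_a}$. Because $\tilde M$ is independent of $\F_{\tau_a}$ and $\proba(e(r)\geq\tau_a\mid\F_{\tau_a})=e^{-r\tau_a}\indicator_{\{\tau_a<\infty\}}$, the strong Markov property gives
$$
\E G(M)\indicator_{\{M\geq a\}}=\E\,e^{-r\tau_a}\,\operatorname{\mathbf{E}}_{X_{\tau_a}}\!\big[G(M)\big]\,\indicator_{\{\tau_a<\infty\}},
$$
where $\operatorname{\mathbf{E}}_{y}[G(M)]=\Et G(y+\tilde M)$ by spatial homogeneity. On $\{\tau_a<\infty\}$ we have $X_{\tau_a}\geq a\geq x^*$, and for any $y\geq x^*$ the maximum satisfies $M\geq y\geq x^*$ under $\operatorname{\mathbf{E}}_{y}$, whence $G(M)=G^*(M)$ almost surely and the averaging hypothesis yields $\operatorname{\mathbf{E}}_{y}[G(M)]=\operatorname{\mathbf{E}}_{y}[G^*(M)]=g(y)$. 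Substituting $y=X_{\tau_a}$ turns the right-hand side into $\E\,e^{-r\tau_a}g(X_{\tau_a})\indicator_{\{\tau_a<\infty\}}$, which is exactly \eqref{eq:threshold}; the two displayed special cases follow by taking $a=x^*$ and separating $r>0$ from $r=0$ (where $e^{-r\tau_a}\equiv 1$).

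I expect the main obstacle to be the rigorous justification of the decomposition $M=X_{\tau_a}+\tilde M$ on $\{\tau_a\leq e(r)\}$, that is, the joint use of the strong Markov property at $\tau_a$ and the memorylessness of the independent exponential clock — the Wiener–Hopf-type step. This requires care with the overshoot at $\tau_a$, with the possibly infinite value of $\tau_a$, and with the quasi-left-continuity of $X$ needed to guarantee $\sup_{[0,\tau_a]}X_t=X_{\tau_a}$ on the relevant event. The degenerate case $r=0$ must be handled as a limit of the same argument, relying on the standing assumption that $M$ is proper.
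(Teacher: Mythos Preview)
Your proposal is correct and follows essentially the same route as the paper: identify $\{M\geq a\}$ with $\{\tau_a\leq e(r)\}$, decompose the supremum at $\tau_a$, use the strong Markov property together with independence of the exponential clock to extract the factor $e^{-r\tau_a}$, and then apply the averaging hypothesis at the overshoot $X_{\tau_a}\geq a\geq x^*$. The only cosmetic difference is that the paper writes out the exponential integral $\int_{\tau_a}^\infty\cdots re^{-rt}\,dt$ explicitly rather than invoking memorylessness abstractly; also, your worry about quasi-left-continuity is slightly misplaced---what is actually needed is only $X_{\tau_a}\geq a$ on $\{\tau_a<\infty\}$, which follows from right-continuity of the c\`adl\`ag paths.
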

\begin{proof}
Consider, for  $a\geq x^*$, a hitting time of the form
\[
\tau_a=\inf\{t\geq 0\colon {}X_t\geq a\}.
\]
As L\'evy processes satisfy  the homogeneity property of increments in time and space, 
conditionally to the $\sigma$-algebra $\F_{\tau_a}$, and on the set $\{\tau_a<\infty\}$, the process
$\tilde{X}_s=X_{\tau_a+s}-X_{\tau_a}$ is independent of $\F_{\tau_a}$ and has the same distribution as $X$ (see Theorem 7, Chapter 4 in \cite{skorokhod}).
We then consider two independent L\'evy processes $X$ and $\tilde{X}$ defined on a product probability space
$\probz\times\tilde{\probz}$. Consider first the case $r=0$. We have
\begin{align*}
\E G(M)&\indicator_{\{M\geq a\}}=
\E {G}\Big(\sup_{0\leq t<\infty}X_t\Big)\indicator_{\{\tau_a<\infty\}}
\\&
=
\E G\Big(X_{\tau_a}+\sup_{\tau_a\leq t<\infty}(X_t-X_{\tau_a})\Big)\indicator_{\{\tau_a<\infty\}}
\\&=
\E\Et G\Big(X_{\tau_a}+\sup_{0\leq s<\infty}\tilde{X}_s\Big)\indicator_{\{\tau_a<\infty\}}=
\E g\left(X_{\tau_a}\right)\indicator_{\{\tau_a<\infty\}}.
\end{align*}
We proceed now for $r>0$.
In this case, we have
\begin{align*}
\E {G}({}{M})&\indicator_{\{{}M\geq a\}}=
\E {G}\Big(\sup_{0\leq t<{e(r)}}X_t\Big)\indicator_{\{\tau_a<{e(r)}\}}\\
&=
\E {G}\Big({}X_{\tau_a}+\sup_{\tau_a\leq t<{e(r)}}(X_t-X_{\tau_a})\Big)\indicator_{\{\tau_a<{e(r)}\}}\\
&=
\E \int_{\tau_a}^{\infty}
{G}\Big({}X_{\tau_a}+\sup_{0\leq s<t-\tau_a}(X_{\tau_a+s}-X_{\tau_a})\Big)
{r}e^{-{r}t}dt\indicator_{\{\tau_a<\infty\}} \\
&=
\E
e^{-{r}\tau_a}
\int_0^{\infty}
{G}\Big({}X_{\tau_a}+\sup_{0\leq s<v}(X_{\tau_a+s}-X_{\tau_a})\Big){r}e^{-{r}v}dv\indicator_{\{\tau_a<\infty\}}\\
&=
\E\Et
e^{-{r}\tau_a}
\int_0^{\infty}
{G}\Big({}X_{\tau_a}+\sup_{0\leq s<v}\tilde{X}_s\Big){r}e^{-{r}v}dv\indicator_{\{\tau_a<\infty\}}\\
&=
\E
e^{-{r}\tau_a}
\Et G\big(X_{\tau_a}+\tilde{{M}}\big)\indicator_{\{\tau_a<\infty\}}=
\E
e^{-{r}\tau_a}
g({}X_{\tau_a})\indicator_{\{\tau_a<\infty\}},
\end{align*}
concluding the proof.
\end{proof}
\begin{remark}Fluctuation identities as the one presented in the previous Lemma in case of exponential or related to exponential functions
have been obtained by
\cite{darling-et-al:1972} for random walks 
and by \cite{alili-kyprianou:2005} for L\'evy processes.
In case of power functions with positive integer exponent \cite{novikov-shiryaev:2004} introduced the Appel polynomials to obtain
similar identities for random walks, and \cite{kyprianou-surya:2005} obtained the corresponding result for
L\'evy processes. The case of power functions with real positive exponent was considered in \cite{novikov-shiryaev:2007}
for both random walks and L\'evy processes.
The identity for general functions was obtained by \cite{surya:2007t}, see also \cite{surya:2007}.
\end{remark}

\begin{lemma}\label{lemma:two}
Consider a non-negative non-decreasing function $f(x)$ and  a real $r\geq 0$.
Then:
(a)
The function
$
h(x)=\E f({M})\quad (x\in\R)
$
is $r$-excessive, and, in consequence,
(b) the process
$\{e^{-{r}t}h(X_t)\}$ is a supermartingale.
\end{lemma}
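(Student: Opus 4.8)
The plan is to verify the two defining properties of an $r$-excessive function for the transition semigroup $P_t f(x)=\E f(X_t)$ of the L\'evy process, namely the excessive inequality $e^{-rt}P_t h(x)\le h(x)$ for every $t\ge 0$ and $x\in\R$, together with the regularity $e^{-rt}P_t h(x)\to h(x)$ as $t\downarrow 0$; part (b) will then follow routinely from the Markov property. Throughout I write $M=\sup\{X_u\colon 0\le u\le e(r)\}$ with $e(r)$ the independent exponential time of parameter $r$ (and $e(0)=\infty$), so that $h(x)=\E f(M)$ with $f\ge 0$ non-decreasing.

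The key step is a representation of $e^{-rt}P_t h(x)$ obtained by splitting the running maximum at the deterministic time $t$, exactly along the lines of Lemma \ref{lemma:one}. First I would record that $e^{-rt}=\probz(e(r)>t)$, and that by the lack-of-memory property, conditionally on $\{e(r)>t\}$ and on $\F_t$ the overshoot $e(r)-t$ is again exponential of parameter $r$ and independent of $\F_t$; combining this with the homogeneity of increments (precisely as in the proof of Lemma \ref{lemma:one}) shows that, on $\{e(r)>t\}$, the post-$t$ supremum $\sup_{t\le u\le e(r)}(X_u-X_t)$ has the law of $\tilde M$ and is independent of $X_t$. Writing $\Et f(y+\tilde M)=h(y)$ and using the independence of $e(r)$ from $X$, this yields
\begin{equation*}
e^{-rt}P_t h(x)=\E\big[\indicator_{\{e(r)>t\}}\,h(X_t)\big]=\E\Big[\indicator_{\{e(r)>t\}}\,f\Big(X_t+\sup_{t\le u\le e(r)}(X_u-X_t)\Big)\Big]=\E\Big[\indicator_{\{e(r)>t\}}\,f\big(\textstyle\sup_{t\le u\le e(r)}X_u\big)\Big].
\end{equation*}

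From this representation both properties follow from the monotonicity and non-negativity of $f$. Since $[t,e(r)]\subseteq[0,e(r)]$ on $\{e(r)>t\}$, one has $\sup_{t\le u\le e(r)}X_u\le M$, hence $f(\sup_{t\le u\le e(r)}X_u)\le f(M)$ because $f$ is non-decreasing; dropping the indicator (legitimate since $f\ge 0$) gives $e^{-rt}P_t h(x)\le\E f(M)=h(x)$, which is the excessive inequality. For the regularity I would let $t\downarrow 0$: the indicator increases to $1$, and by right-continuity of the paths at the origin $\sup_{t\le u\le e(r)}X_u\uparrow M$, so monotone convergence yields $e^{-rt}P_t h(x)\to h(x)$. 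This establishes that $h$ is $r$-excessive, proving (a). For (b) I would combine (a) with the Markov property: for $s\le t$,
\begin{equation*}
\E\big[e^{-rt}h(X_t)\mid\F_s\big]=e^{-rs}\,e^{-r(t-s)}P_{t-s}h(X_s)\le e^{-rs}h(X_s),
\end{equation*}
the last inequality being exactly the excessive inequality applied at the point $X_s$; adaptedness and integrability are clear from $h\ge 0$ and $\E e^{-rt}h(X_t)=e^{-rt}P_t h(x)\le h(x)$, so $\{e^{-rt}h(X_t)\}$ is a supermartingale.

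The main obstacle I expect is the careful justification of the representation displayed above: one must correctly identify, via the memoryless property of the exponential time together with the homogeneity of increments, the conditional law of the post-$t$ supremum as that of an independent $\tilde M$ shifted by $X_t$, and one must be attentive to the degenerate case $r=0$ (where $e(r)=\infty$) as well as to the right-continuity argument establishing $\sup_{t\le u\le e(r)}X_u\uparrow M$. Once this splitting is in place---and it is essentially the computation already carried out in Lemma \ref{lemma:one}---the remaining steps are only the monotonicity bound and standard semigroup manipulations.
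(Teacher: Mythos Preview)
Your argument is correct. The paper's own proof is much terser: it simply observes that for non-decreasing $f$ one has $\sup_{0\le t\le e(r)}f(X_t)=f(M)$, and then invokes Lemma~2.2 of \cite{bcs}, which asserts directly that $x\mapsto\E\big[\sup_{0\le t\le e(r)}g(X_t)\big]$ is $r$-excessive for any non-negative measurable $g$. So the paper outsources the actual verification of excessiveness to an external reference, while you carry it out explicitly.

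Your route---splitting the running maximum at a deterministic time $t$ via memorylessness of $e(r)$ and homogeneity of increments, exactly as in Lemma~\ref{lemma:one}---is in fact essentially what the proof of Lemma~2.2 in \cite{bcs} does. What you gain is a self-contained argument that does not depend on that reference; what the paper gains is brevity. Substantively the two proofs coincide once one unpacks the cited lemma, and your identification of the key identity $e^{-rt}P_t h(x)=\E\big[\indicator_{\{e(r)>t\}}f\big(\sup_{t\le u\le e(r)}X_u\big)\big]$ is exactly the right mechanism.
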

\begin{proof}
The fact that (b) follows from (a) is standard, see for
example \cite{s}.
The statement (a) is a corollary of Lemma 2.2  in \cite{bcs}, as for non-decreasing $f$
we have
$$
\sup_{0\leq t\leq e(r)}f(X_t)=f\left(\sup_{0\leq t\leq e(r)}X_t\right)=f(M),
$$
concluding the proof.
\end{proof}
\begin{proof}[Proof of the Theorem \ref{theorem:one}]
We finally observe that \eqref{eq:equality} follows from Lemma \ref{lemma:one} with $a=x^*$.
To prove \eqref{eq:inequality} we observe that $V(x)$ is excessive based on Lemma \ref{lemma:two} applied to the non-decreasing function $G(x)$ in \eqref{eq:ge}, so, for any stopping time $\tau\in\M$, we have
$$
V(x)\geq \E e^{-r\tau}V(X_\tau)\geq \E e^{-r\tau}g(X_\tau).
$$
This concludes the proof of Theorem \ref{theorem:one}.
\end{proof}
\begin{remark}
If the equality in \eqref{eq:relevant} holds for some $x<x^*$, then, defining the set
$$
S=\{x\in \R\colon V(x)=g(x)\},
$$
 the stopping time
$$
\tau^{**}=\inf\{t\geq 0\colon X_t\in S\}
$$
is also an optimal stopping time for the problem \eqref{eq:osp}. In fact, from the super-martingale property,
as $\tau^{**}\leq \tau^*$,
we have
$$
V(x)\geq \E e^{-r\tau^{**}}V(X_{\tau^{**}})\geq \E e^{-r\tau^{*}}V(X_{\tau^{*}}),
$$
obtaining that $\E e^{-r\tau^{**}}V(X_{\tau^{**}})=\E e^{-r\tau^{*}}g(X_{\tau^{*}})$.
\end{remark}

\begin{remark}
A method to find $G^*(x)$ and $x^*$ consists in first imposing condition \eqref{eq:average} for all $x\in\R$, i.e. in finding the averaging function of $g$ and $M$, and then finding its largest root.
This determines $G^*$ for $x\geq x^*$, in case it is a non-decreasing function on this half-line.
\end{remark}

The following result gives a sufficient condition in order to verify condition \eqref{eq:relevant}.
\begin{corollary}[\cite{surya:2007t}]\label{corollary:1}
Assume that there exists a function ${Q}\colon\R\to\R$ such that
\[
\E {Q}({}{M})=g(x)\text{ for all $x$,}
\]
and a real constant $x^*$ such that whenever
\[
x<x^*<y< z
\]
we have
\begin{equation}\label{eq:surya}
{Q}(x)\leq {Q}(x^*)=0\leq {Q}(y)\leq {Q}(z).
\end{equation}
Then $G^*(x)=Q(x)$ when $x\geq x^*$ verifies the conditions of Theorem \ref{theorem:one} ,
and $\lim_{x\to\infty}g(x)=0$.
\end{corollary}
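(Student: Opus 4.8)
The plan is to show that the ordering hypothesis \eqref{eq:surya} forces the restriction $G^*:=Q|_{[x^*,\infty)}$ to fulfil every requirement of Theorem~\ref{theorem:one}, so that the latter can then be quoted directly. Concretely, I would check four things: that $G^*$ is non-decreasing on $[x^*,\infty)$; that it solves the averaging identity $\E G^*(M)=g(x)$ for all $x\geq x^*$; that \eqref{eq:relevant} holds on $(-\infty,x^*)$; and that $g$ satisfies the limiting condition $\lim_{x\to-\infty}g(x)=0$ demanded by Theorem~\ref{theorem:one}.

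The first two points read off \eqref{eq:surya} almost immediately. For any $x^*<y<z$, choosing an arbitrary $x<x^*$ in \eqref{eq:surya} gives $0=Q(x^*)\leq Q(y)\leq Q(z)$, so $Q$ is non-negative and non-decreasing on $[x^*,\infty)$; in particular $G^*$ is non-decreasing, and $g(x)=\E Q(M)\geq 0$ for $x\geq x^*$ because $M\geq X_0=x\geq x^*$ almost surely under $\proba$. This same domination $M\geq x^*$, valid precisely when $x\geq x^*$, makes the averaging identity transfer for free: on $\{x\geq x^*\}$ one has $G^*(M)=Q(M)$ a.s., whence
\[
\E G^*(M)=\E Q(M)=g(x),\qquad x\geq x^*,
\]
which is the averaging hypothesis of Theorem~\ref{theorem:one}.

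For \eqref{eq:relevant} I would compare $V$ and $g$ through a single decomposition. Since $G$ equals $Q$ on $[x^*,\infty)$ and vanishes below $x^*$, and since $\E Q(M)=g(x)$,
\[
V(x)=\E G(M)=\E Q(M)\indicator_{\{M\geq x^*\}}=g(x)-\E Q(M)\indicator_{\{M< x^*\}}.
\]
By \eqref{eq:surya} we have $Q\leq 0$ on $(-\infty,x^*)$, so on the event $\{M<x^*\}$ the integrand $Q(M)$ is non-positive; hence the subtracted term is $\leq 0$ and $V(x)\geq g(x)$ for every $x$, which is \eqref{eq:relevant} on $(-\infty,x^*)$ (for $x\geq x^*$ the event $\{M<x^*\}$ is null, the correction disappears, and one recovers $V=g$ there). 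For the limit at $-\infty$ I would invoke spatial homogeneity, $\E f(M)=\Ez f(x+M)$ with $M$ computed under $\probz$ on the right, to write $g(x)=\Ez Q(x+M)$; as $x\to-\infty$ one has $x+M\to-\infty$ a.s., the contribution of $\{x+M<x^*\}$ is non-positive by the sign of $Q$, while $g\geq 0$ squeezes these together to give $\lim_{x\to-\infty}g(x)=0$.

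The main obstacle is the rigorous handling of the expectations rather than any conceptual difficulty. Splitting $\E Q(M)$ into its parts over $\{M\geq x^*\}$ and $\{M<x^*\}$, and passing the limit through the expectation in the last step, both require an integrability input such as $\E|Q(M)|<\infty$ (for a polynomial $Q$, a moment condition on $M$) together with dominated or monotone convergence. The delicate piece is controlling the non-negative part $Q(x+M)\indicator_{\{x+M\geq x^*\}}$, supported on the event $\{M\geq x^*-x\}$ whose probability tends to $0$ as $x\to-\infty$; this is where genuine care and the integrability hypothesis enter. Every other step is a direct consequence of the sign structure \eqref{eq:surya} and of the domination $M\geq X_0$.
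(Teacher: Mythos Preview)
Your proposal is correct and follows the same route as the paper: the inequality $V\geq g$ comes from $G\geq Q$ pointwise (equivalently, your decomposition with the non-positive correction term), and the limit $g(x)\to 0$ as $x\to-\infty$ uses the same splitting over $\{x+M\gtrless x^*\}$ with dominated convergence on the non-negative piece. The paper is terser---it calls the non-decreasing and averaging checks for $G^*$ ``immediate''---but you have also correctly isolated the one genuine technical point (integrability for the dominated-convergence step), which the paper leaves implicit.
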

\begin{proof}[Proof of the Corollary]
Let us check first that 
\begin{equation}\label{eq:gez}
\lim_{x\to-\infty}g(x)=0.
\end{equation}
 In fact, if $\limsup_{t\to-\infty}g(x)>0$ there exists a decreasing sequence $x_n\to-\infty$ such that $g(x_n)\geq\ell>0$.
 But
 \begin{align*}
 g(x_n)&=\Ez Q(x_n+M)\\&=\Ez Q(x_n+M)\indicator_{\{x_n+M\geq x^*\}}
 +\Ez Q(x_n+M)\indicator_{\{x_n+M< x^*\}}\\
 		&\leq \Ez Q(x_n+M)\indicator_{\{x_n+M\geq x^*\}} \to 0	\text{ as $n\to \infty$},
 \end{align*}
 by dominated convergence, as $Q(x)\indicator_{\{x+M\geq x^*\}}$ is decreasing in $x$ by hypothesis,
 giving a contradiction, and concluding \eqref{eq:gez}.

The rest of the proof is immediate as condition \eqref{eq:surya} implies condition \eqref{eq:relevant}.
In fact, for $G$ defined in \eqref{eq:ge}, we have
$$
V(x)=\E G(M)\geq \E Q(M)= g(x),
$$
concluding \eqref{eq:relevant}, and the proof of the Corollary. 
\end{proof}

\section{Polynomial rewards}
Our payoff function is constructed from a polynomial
\begin{equation}\label{eq:polynomial}
p_n(x)=x^n+a_{n-1}x^{n-1}+\cdots+a_1x,
\end{equation}
where we assume that $x=0$ is a root of $p_n(x)$.
The payoff is  the positive part of a polynomial, for positive values of the variable $x$:
$$
g(x)=\left(p_n(x^+)\right)^+=
\begin{cases}
p_n(x)^+,	&\text{when $x\geq 0$},\\
0,													&\text{otherwise.}
\end{cases}
$$
Observe that, the problem  \eqref{eq:osp} with reward function $\alpha g(\cdot+x_0)$ has solution
$\alpha V(\cdot+x_0)$, taking
the first coefficient $a_n=1$ and $x=0$ as the smallest root of $p_n$ in \eqref{eq:polynomial} entails no loss of generality for any polynomial with positive leading coefficient and at least one root.

\subsection{The averaging polynomial}
We search for a function ${P}_n(x)$ such that
\begin{equation}\label{eq:average-pol}
\E {P}_n({}M)=p_n(x), x\in \R.
\end{equation}
It is not difficult to see that this averaging function can be taken to be a polynomial of order $n$,
\begin{equation}\label{eq:averaging}
{P}_n(x)=x^n+b_{n-1}x^{n-1}+\cdots+b_1x+b_0.
\end{equation}
Assume that the first $n$  moments of $M$ are finite and denote them by $\mu_k=\Ez(M^k)\ (k=1,\dots,n)$. Denote
$\mu_0=1$. With this notation,
the l.h.s. in equation \eqref{eq:average-pol}, after changing the order in the sums,
reads
$$
\sum_{k=0}^nb_k\sum_{\ell=0}^kC^\ell_k x^\ell \mu_{k-\ell}=
\sum_{\ell=0}^n\left(\sum_{k=\ell}^nb_kC^\ell_k  \mu_{k-\ell}\right) x^\ell,
$$
that equating coefficients of equal degree in \eqref{eq:average-pol}
gives
\begin{equation}\label{eq:system}
\sum_{k=\ell}^nb_kC^\ell_k  \mu_{k-\ell}=a_\ell,\ \ell=n,n-1,\dots,0.
\end{equation}
This system of equations can be solved recursively backwards,
i.e.
$$
\aligned
b_n&=1,\\
b_{n-1}&=a_{n-1}-n\mu_1,\\
b_\ell&=a_\ell-\sum_{k=\ell+1}^nb_kC_k^
\ell\mu_{k-\ell},\quad\ell=n-2,\dots,0,
\endaligned
$$
where we put $a_0=0.$
\begin{remark} An equivalent way to obtain the averaging function $P_n(x)$ in \eqref{eq:average-pol}
is to write it as
$$
P_n(x)=\sum_{k=1}^n a_k Q_k(x)
$$
where the $Q_k(x)$ are the Appell polynomials of the random variable $M$,
introduced in \cite{novikov-shiryaev:2004}, applied also in \cite{kyprianou-surya:2005}, \cite{salminen} and \cite{mt}.
\end{remark}
We have the following simple result.
\begin{proposition}\label{proposition:aux} Consider a polynomial $p_n(x)$ as in \eqref{eq:polynomial}.
\par\noindent
{\rm (a)} The averaging polynomial $P_n(x)$ constructed as in \eqref{eq:averaging}
has at least one positive root.
\par\noindent
{\rm (b)} If $x^*$ denotes the largest root of $P_n(x)$, we have $p_n(x)\geq 0$ for $x\geq x^*$ and $p_n(x)> 0$ for $x> x^*$.
\end{proposition}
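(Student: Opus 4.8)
The plan is to carry every argument through the averaging identity \eqref{eq:average-pol} rewritten, by spatial homogeneity of the L\'evy process, in the form
\[
p_n(x)=\Ez P_n(x+M),
\]
where now $M$ denotes the overall maximum started from $0$; this is exactly the binomial expansion already used to derive the system \eqref{eq:system}, since under $\proba$ one has $M=x+\bar M$ with $\bar M$ distributed as the maximum started from $0$. Two structural facts drive everything. First, $M\geq 0$ almost surely (because $M=\sup_{0\leq t\leq e(r)}X_t\geq X_0=0$ under $\probz$), together with the standing non-degeneracy $\probz_0(M>0)>0$. Second, $P_n$ is monic of degree $n$, so $P_n(x)\to+\infty$ as $x\to+\infty$.

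For part (a) I would argue by contradiction. Setting $x=0$ and using that $0$ is a root of $p_n$ yields
\[
0=p_n(0)=\Ez P_n(M).
\]
Suppose $P_n$ had no root in $(0,\infty)$. Being continuous, zero-free on $(0,\infty)$, and divergent to $+\infty$, it would then satisfy $P_n>0$ on $(0,\infty)$, hence $P_n\geq 0$ on $[0,\infty)$ by continuity at $0$. Since $M\geq 0$, this makes $P_n(M)\geq 0$ almost surely, so the vanishing of its expectation forces $P_n(M)=0$ almost surely; but on $\{M>0\}$, a set of positive probability, one has $P_n(M)>0$. This contradiction gives a positive root.

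For part (b), let $x^*$ be the largest root of $P_n$, which exists by (a). The same sign/continuity reasoning applied above $x^*$ gives $P_n(x^*)=0$, $P_n>0$ on $(x^*,\infty)$, and hence $P_n\geq 0$ on $[x^*,\infty)$. I would then transport these inequalities through the identity: for $x\geq x^*$ one has $x+M\geq x\geq x^*$, so $P_n(x+M)\geq 0$ almost surely and $p_n(x)=\Ez P_n(x+M)\geq 0$; for $x>x^*$ one has $x+M\geq x>x^*$, so $P_n(x+M)>0$ almost surely, and since the first $n$ moments of $M$ are finite the expectation is well defined and strictly positive, giving $p_n(x)>0$. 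This is precisely the assertion of (b).

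The computations are routine; the only delicate point is the passage from ``$\Ez P_n(M)=0$ with $P_n(M)\geq 0$'' to the existence of a strictly positive root. This is exactly where the standing assumption $\probz_0(M>0)>0$ (which excludes the negative of a subordinator, the case $M\equiv 0$) is indispensable, as it is what makes $P_n(M)$ strictly positive on a set of positive probability and thereby produces the contradiction.
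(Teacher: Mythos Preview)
Your proof is correct and essentially identical to the paper's: both argue (a) by contradiction from $0=p_n(0)=\Ez P_n(M)$ combined with $\probz(M>0)>0$, and (b) by transporting the sign of $P_n$ on $[x^*,\infty)$ through the averaging identity. Your write-up is in fact slightly more careful about continuity at the endpoints than the paper's own brief argument.
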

\begin{proof} (a)
If $P_n(x)$ has no positive root for $x\geq 0$ then $P_n(x)>0$ for all $x>0$.
As $\mathbf{P}_0(M>0)>0$, this gives $0<\mathbf{E}_0 P_n(M)=p_n(0)=0$, a contradiction.
\par\noindent
{\rm (b)} is a consequence of
\begin{equation}\label{eq:aux}
p_n(x)=\E P_n({}M)\geq 0 \text{ for $x\geq x^*$},
\end{equation}
as $\mathbf{P}_x\{M\geq x^*\}>0$ and $P_n(x)>0$
for $x\geq x^*$. The condition $\proba (M>x)>0$ and inequality $P_n(x)>0$ for $x>x^*$ gives the strict inequality in \eqref{eq:aux},
concluding the proof.
\end{proof}

\begin{theorem}\label{theorem:osp-polynomials}
Let $p_n(x)$ be a polynomial of degree $n$ with leading coefficient $a_n=1$ and $p_n(0)=0$.
Define as before
$$
g(x)=\left(p_n(x^+)\right)^+.
$$
Denote by $P_n(x)$ the averaging polynomial of $p_n(x)$ for the random variable $M$.
Denote by $x^*$ the largest positive root of $P_n(x)$.
Define
$
{G}(x)={P}_n(x)\indicator_{\{x\geq x^*\}},
$
and
$$
{V}(x)=\E {G}({}M),\quad \tau^*=\inf\{t\geq 0\colon {}X_t\geq x^*\}.
$$
If
$G(x)$  is non-decreasing and
$V(x)\geq g(x)$  for $x\leq x^*$,
then, the pair ${V}(x)$, $\tau^*$
is a solution of the optimal stopping problem \eqref{eq:osp}.
\end{theorem}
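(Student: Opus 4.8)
The plan is to obtain this statement as an immediate specialization of Theorem~\ref{theorem:one}, with the choice $G^*(x)=P_n(x)$ on the half-line $[x^*,\infty)$. First I would record that the standing hypotheses of Theorem~\ref{theorem:one} on the payoff $g(x)=(p_n(x^+))^+$ hold automatically: it is non-negative by construction, and since $g(x)=0$ for every $x<0$ we have $\lim_{x\to-\infty}g(x)=0$, as required there.

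The crux is the averaging identity $\E G^*(M)=g(x)$ for all $x\geq x^*$, and here I would lean on Proposition~\ref{proposition:aux}. Part~(a) ensures that the largest positive root $x^*$ exists, so in particular $x^*>0$ and hence $x^+=x$ whenever $x\geq x^*$; part~(b) gives $p_n(x)\geq 0$ on $[x^*,\infty)$, so the outer positive part is inert and $g(x)=p_n(x)$ there. Since $M\geq x\geq x^*$ for such $x$, we have $G^*(M)=P_n(M)$, and combining with the defining relation $\E P_n(M)=p_n(x)$ from \eqref{eq:average-pol} yields $\E G^*(M)=\E P_n(M)=p_n(x)=g(x)$ for $x\geq x^*$. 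This exhibits $G^*$ as a valid averaging function on $[x^*,\infty)$; moreover $P_n(x^*)=0$, so the extension $G$ of \eqref{eq:ge} agrees with $P_n(x)\indicator_{\{x\geq x^*\}}$, matching the definition in the statement.

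It then remains only to transfer the two non-trivial hypotheses of Theorem~\ref{theorem:one}. The monotonicity requirement on $G^*$ is precisely the assumption that $G(x)=P_n(x)\indicator_{\{x\geq x^*\}}$ is non-decreasing, which forces $P_n=G^*$ to be non-decreasing on $[x^*,\infty)$. Condition~\eqref{eq:relevant}, namely $V(x)\geq g(x)$ for $x<x^*$, follows at once from the assumed inequality $V(x)\geq g(x)$ for $x\leq x^*$, the latter being the stronger of the two. With every hypothesis of Theorem~\ref{theorem:one} verified, its conclusion gives that $V(x)=\E G(M)$ is the value function of \eqref{eq:osp} and that $\tau^*$ is optimal.

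The single delicate point---and the step I would flag as the main obstacle---is the equality $g(x)=p_n(x)$ on $[x^*,\infty)$: one must know both that $x^*\geq 0$, so that the argument $x^+$ coincides with $x$, and that $p_n$ stays non-negative on this half-line, so that the outer positive part does not truncate it. Both facts are exactly what Proposition~\ref{proposition:aux} supplies, and once they are in hand the remainder is the routine bookkeeping of specializing Theorem~\ref{theorem:one} to the polynomial reward.
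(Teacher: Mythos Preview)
Your proposal is correct and follows exactly the paper's approach: the paper's own proof is the single sentence ``The result follows directly from the application of Theorem~\ref{theorem:one},'' and your write-up is the natural unpacking of that sentence, using Proposition~\ref{proposition:aux} to justify $g(x)=p_n(x)$ on $[x^*,\infty)$ and then matching the remaining hypotheses of Theorem~\ref{theorem:one} to those assumed here.
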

\begin{proof}The result follows directly from the application of Theorem \ref{theorem:one}.
\end{proof}
\section{Examples}

In order to illustrate our results we first assume that $X$   is the Brownian motion and $r=1/2$.
In this case $M$ has exponential distribution with parameter one (in the general case with parameter
$1/\sqrt{2r}).$ Its moments satisfy
$\mu_n=\Gamma(n+1)=n!$
Observe that  for spectrally negative L\'{e}vy processes, the random variable $M$ is also exponentially distributed, with parameter $\Phi(r)$, where $\Phi(r)$ is the unique positive root of equation $\psi(\lambda)=r$ and $\psi(\lambda)$ is the Laplace exponent of corresponding  L\'{e}vy process (see for instance \cite{bertoin}).
In this case we can produce  similar results.

\subsection{Example 1: Quadratic polynomials}\label{example:1}
Consider $p_2(x)=x^2+ax$. Solving \eqref{eq:system} we obtain
$$
P_2(x)=x^2+(a-2\mu_1)x+2(\mu_1)^2-\mu_2-a\mu_1
$$
that has its largest root
$$
x^*=\mu_1-a/2+\sqrt{\mu_2-\mu_1^2+a^2/4}= \Ez M-a/2+\sqrt{\var_0 M+a^2/4},
$$
that is evidently positive that can be checked independently of Proposition \ref{proposition:aux}.
In case $a=0$ we obtain
$$
x^*=\mu_1-a/2+\sqrt{\mu_2-\mu_1^2+a^2/4}= \Ez M+\sqrt{\var_0M}
$$
that gives the solution found in \cite{novikov-shiryaev:2004}. In our particular case $\mu_1=1, \mu_2=2$,
$P_2(x)=x^2+(a-2 )x -a,$ and $x^*=1-a/2+\sqrt{1+a^2/4}$. For any $a\in\R$ it is evident that $G(x)$ increases after $x^*$ and  it is not difficult to calculate $V(x)$: $$V(x)=(x^2+ax)\indicator_{\{x>x^*\}}+((x^*)^2+ax^*)e^{x-x^*}\indicator_{\{x\leq x^*\}}.$$

In order to apply Theorem \ref{theorem:osp-polynomials}, we need only to check the condition $V(x)\geq g(x)$  for $x\leq x^*$, but in fact it is only necessary to check this for $-a\leq x\leq x^*$.
Consider the case $a<0$, the opposite case is considered similarly. So, we need to check the condition
\begin{equation}\label{eq123}(x^2+ax)e^{-x}\leq ((x^*)^2+ax^*)e^{-x^*}\end{equation} for $-a\leq x\leq x^*$. The latter inequality holds  for $x=-a$ where we have the strict inequality and for $x=x^*$ where we have the equality. Furthermore, function $f(x)=(x^2+ax)e^{-x}$ has the derivative
$f'(x)=-P_2(x)e^{-x}$ which is positive between the roots of $P_2(x)$, the biggest is $x^*$.  Moreover, the smallest root of $P_2(x)$ equals $ 1-a/2-\sqrt{1+a^2/4}<-a$ for negative $a$. It means that $f(x)$ increases on $(-a,x^*)$ whence we get \eqref{eq123}. So, according to Theorem \ref{theorem:osp-polynomials}, $(V(x), x^*)$ create a solution of \eqref{eq:osp}. The same is true for $a>0$. In Fig. \ref{figure:1} we plot the solution for $a=-1$ and $a=1$.
\begin{figure}[h]\label{figure:1}
\begin{center}
\includegraphics[scale=0.45]{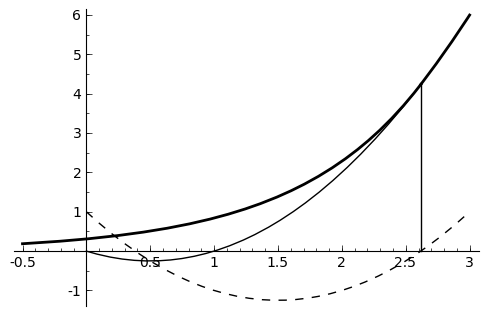}
\includegraphics[scale=0.45]{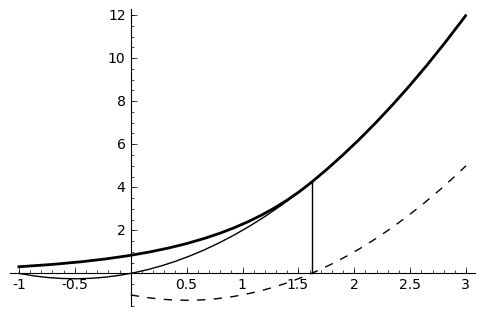}
\caption{Example 1 with $a=1$ (left) and $a=-1$ (right). Here $p_2(x)$ is plotted continuously, $P_2(x)$ (dashed) gives the roots. The thick lines are the respective solutions $V(x)$.
Observe that in case $a=1$ (left) the averaging function $P_2$ does not remain non-positive for values smaller than the root $x^*\sim 2.62$.}
\end{center}
\end{figure}
\subsection{Example 2: Cubic polynomials}
Consider $p_3(x)=x^3+ax^2+bx$. Solving \eqref{eq:system} we obtain
\begin{multline*}
P_3(x)=x^3+(a-3\mu_1)x^2+(b-3\mu_2-2(a-3\mu_1)\mu_1)x\\
-\mu_3-(a-3\mu_1)\mu_2-(b-3\mu_2-2(a-3\mu_1)\mu_1)\mu_1.
\end{multline*}
If we further assume that $X$ is the Brownian motion and $r=1/2$, we have

$$P_3(x)=x^3+(a-3)x^2+(b-2a)x -b $$
and $$V(x)=(x^3+ax^2+bx)\indicator_{\{x>x^*\}}+((x^*)^3+a(x^*)^2+bx^*)e^{x-x^*}\indicator_{\{x\leq x^*\}}.$$
If $b>0$, $P_3$ evidently has at least one positive root since $P_3(0)<0$ and $P_3(+\infty)=+\infty$. Let $b<0, a>-2$, then $P_3(1)=-a-2<0$, and at least one  positive root exceeds $1$. If  $b<0, a\leq-2$, then $P(-a)=-a^2-b(1+a)<0$  and at least one  positive root exceeds $-a$. So, in any case $P_3$ has positive roots,   in accordance with  Proposition \ref{proposition:aux} but we have checked this  independently. Now, in order to apply Theorem \ref{theorem:osp-polynomials},   consider some particular cases. In the case when $3<a<\frac{b}2\wedge 8$, (the case of positive coefficients, for example, $a=4, b=10$)  $p_3(x)$  has only one root $x=0$ because  other roots that should equal $x_{1,2}=-\frac{a}2\pm\sqrt{\frac{a^2}{4}-b}$ do not exist (discriminant is negative, since $\frac{a^2}{4}-b<\frac{ab}{8}-b<0$), the derivative $P_3'(x)=3x^2+2(a-3)x+b-2a$ has two negative roots therefore it is positive  on $[0,\infty)$, and $P_3(x)$ increases on $[x^*, \infty)$, even more, it increases on $[0, \infty)$ being negative on $[0, x^*)$. Moreover, as in the example with quadratic polynomials, we need to check inequality
$$(x^3+ax^2+bx)e^{-x}\leq ((x^*)^3+a(x^*)^2+bx^*)e^{-x^*}$$
on the interval $[0, x^*]$  but on the interval $[0, x^*]$ the derivative of the function $(x^3+ax^2+bx)e^{-x}$ being equal $-P_3(x)e^{-x}$ is positive therefore both conditions of Theorem \ref{theorem:osp-polynomials} hold.
In the  case $a=b=0$, i.e. $p_3(x)=x^3$, we have

\begin{equation*}
P_3(x)=x^3-3x^2,
\end{equation*}
with largest root $x^*=3$ (see Fig. \ref{figure:3}). Evidently, $P_3$ increases on $[x^*, \infty)$ because its derivative $3x^2-6x$ is positive on the interval  $[2, \infty)$ and $-P_3'$ is positive on $(0,3)$ which supplies both conditions of Theorem \ref{theorem:osp-polynomials}.
\begin{figure}[h]
\begin{center}
\includegraphics[scale=0.45]{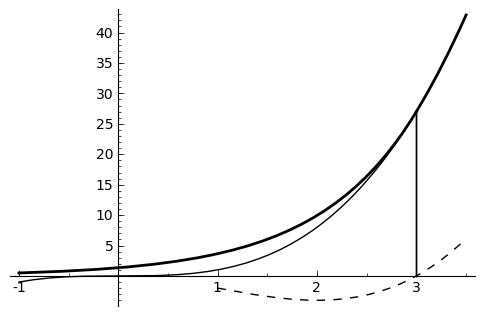}
\includegraphics[scale=0.45]{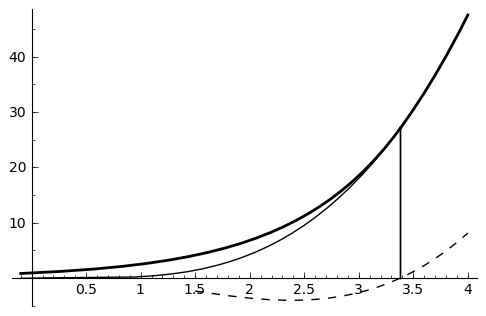}
\caption{Example 2 with $a=b=0$ (left) and $a=-9/8$, $b=3/8$ (left). Here $P_3(x)$ is plotted continuously,
$Q_3(x)$ (dashed) gives the root $x^*$ in each case. The thick lines are the respective solutions $V(x)$.}
\label{figure:2}
\end{center}
\end{figure}
An example for Brownian motion with $r=1/2$ and   polynomial with positive $b$ and negative $a$
is shown in Fig. \ref{figure:2}. We put in this case
$$
p_3(x)=x^3-(9/8)x^2+3/8,\qquad
P_3(x)=x^{3} - \frac{33}{8} \, x^{2} + \frac{21}{8} \, x - \frac{3}{8},
$$
and the largest root is $x^*=3.3815$.

\subsection{Example 3: Kou's process for a quartic polynomial}
A diffusion process $\{X_t\}$ with two sided exponential jumps, defined by the formula
$$
X_t=at+\sigma W_t+\sum_{k=1}^{N_t}Y_k-\sum_{k=1}^{N'_t}Y'_k,
$$
is known in the financial literature as a Kou's process (see \cite{kou} and \cite{ct}).
Here $\{N_t\}$ (resp. $\{N'_t\}$) is a Poisson process with parameter $\mu$ (resp. $\nu$)
and $\{Y_k\}$ (resp $\{Y'_k\}$) is a sequence of independent exponential random variables with parameter
$\alpha$ (resp $\beta$).
The charactristic exponent \eqref{eq:char-exp} of the process
is given by
$$
\psi(z)=az+\frac12\sigma^2z^2+\mu\frac{z}{\alpha-z}-\nu\frac{z}{z+\beta},
$$
and the density of the maximum $M$ in this case is a mixture of two exponentials
$$
f_M(x)=A_1r_1e^{-r_1x}+A_2r_2e^{-r_2x},
$$
with coefficients
$$
A_1={1-r_1/\beta\over 1-r_1/r_2},\quad
A_2={1-r_2/\beta\over 1-r_2/r_1},
$$
where $0<r_1<r_2$ are the positive roots of the equation $\psi(z)=r$
(see \cite{mordecki:2003}).  In consequence the moments are given by
$$
\mu_k=k!\left(\frac{A_1}{r_1^k}+\frac{A_2}{r_2^k}\right).
$$
We consider a quartic polynomial
$$
p(x)=x^{4} + a_3x^{3} + a_2x^{2} +a_1x.
$$
If we denote $P(x)=x^4+b_3x^3+b_2x^2+b_1x+b_0$,
applying \eqref{eq:system} we obtain
\begin{align*}
b_3&=a_3-4\mu_1\\
b_2&=a_2-6\mu_2-3b_3\mu_1\\
b_1&=a_1-4\mu_3-3b_3\mu_2-2b_2\mu_1\\
b_0&=-(\mu_4+b_3\mu_3+b_2\mu_2+b_1\mu_1)
\end{align*}
Assuming that there exists a value $x^*$ that satisfies the conditions of Theorem \ref{theorem:osp-polynomials},
we write the possible value function
\begin{equation*}
V(x)=\E P(x+M)\indicator_{\{x+M\geq x^*\}}=B_1e^{r_1(x-x^*)}+B_2e^{r_2(x-x^*)},
\end{equation*}
where
$$
B_1=A_1r_1\int_0^\infty P(z+x^*)e^{-r_1z}dz,\quad
B_2=A_2r_2\int_0^\infty P(z+x^*)e^{-r_2z}dz.
$$
To proceed we choose values for the parameters:
$$
a=2,\ \sigma=1,\ \mu=1,\ \nu=1,\ \alpha=2,\ \beta=2,\ r=6.
$$
and choose the polynomial
$$
p(x)=x(x-1)(x-2)(x-3)=x^{4} - 6x^{3} + 11x^{2} - 6x.
$$
We obtain $r_1=1.4327$, $r_2=2.8740$, giving $A_1=0.5656,$ $A_2=0.4344$.
In consequence $x^*=4.3706$. For this sets of parameters the conditions of Theorem \ref{theorem:osp-polynomials} are fulfilled (see Figure \ref{figure:3}).
\begin{figure}[h]
\begin{center}
\includegraphics[scale=0.45]{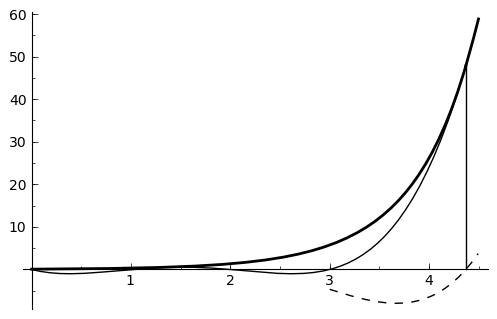}
\includegraphics[scale=0.45]{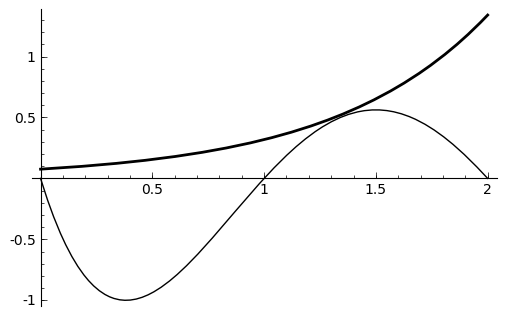}
\caption{On the left $p(x)$ is plotted continuously, $P(x)$ (dashed) gives the root $x^*=4.37
$. The thick line is the solution $V(x)$.
On the right we observe that $V(x)\geq g(x)$.}
\label{figure:3}
\end{center}
\end{figure}

\end{document}